\theoremstyle{plain}
\newtheorem{theorem}{Theorem}
\newtheorem{lemma}[theorem]{Lemma}
\newtheorem{proposition}[theorem]{Proposition}
\newtheorem{remark}[theorem]{Remark}
\newtheorem{definition}[theorem]{Definition}
\providecommand{\openbox}{\leavevmode
  \hbox to.77778em{%
  \hfil\vrule
  \vbox to.575em{\hrule width.5em\vfil\hrule}%
  \vrule\hfil}}
\DeclareRobustCommand{\qed}{%
  \ifmmode
    \eqno \def\@badmath{$$}
    \let\eqno\relax \let\leqno\relax \let\veqno\relax
    \hbox{\openbox}%
  \else
    \leavevmode\unskip\penalty9999 \hbox{}\nobreak\hfill
    \quad\hbox{\openbox}%
  \fi
}
\DeclareMathOperator{\rank}{rank}
\renewcommand*{\top}{%
  {\mathpalette\@transpose{}}%
}
\newcommand*{\@transpose}[2]{%
  \scriptsize
  \raisebox{\depth}{$\m@th#1\mathsf{T}$}%
}
\begin{document}
\renewcommand{\arraystretch}{1.3} %
\begin{frontmatter}

\title{On computing sparse generalized inverses}

\author[label1,label2]{Gabriel Ponte}
\author[label2]{Marcia Fampa}
\author[label1]{Jon Lee}
\author[label3]{Luze Xu}

\affiliation[label1]{organization={University of Michigan},
            city={Ann Arbor},
            country={USA}}

\affiliation[label2]{organization={Federal University of Rio de Janeiro},
            country={Brazil}}
            
\affiliation[label3]{organization={UC Davis},
            country={USA}}

%

\date{\today}%

\begin{abstract}
The well-known M-P (Moore-Penrose) pseudoinverse is used in several linear-algebra applications; for example, to compute least-squares solutions of inconsistent  systems of linear equations.  It is uniquely characterized by four properties, but not all of them need to be satisfied for some applications. For computational reasons, it is convenient then, to construct sparse block-structured matrices satisfying  relevant properties of the M-P pseudoinverse for specific applications. (Vector) 1-norm minimization has been used to induce sparsity  in this context. Aiming at row-sparse generalized inverses motivated by the least-squares application, we consider  2,1-norm minimization (and generalizations). In particular, we show that a 2,1-norm minimizing generalized inverse satisfies two additional  M-P pseudoinverse properties, including the one needed for computing least-squares solutions. We present  
mathematical-optimization formulations
 related to finding row-sparse generalized inverses that can be solved very efficiently,  and compare their solutions numerically 
 to  generalized inverses constructed by other methodologies, also aiming at sparsity and row-sparse structure.   

\end{abstract}

\begin{keyword}
Moore-Penrose pseudoinverse\sep generalized inverse
\sep sparse optimization
\sep norm minimization \sep least squares \sep linear program
%
%
\end{keyword}

\end{frontmatter}


\section{Introduction}\label{sec:introd}
The well-known M-P  (Moore-Penrose) pseudoinverse, independently discovered  by  E.H. Moore and R. Penrose, is used in several linear-algebra applications --- for example, to compute least-squares solutions of inconsistent  systems of linear equations. If $A=U\Sigma V^\top$ is the real singular-value decomposition of $A$ (see \cite{GVL1996}, for example),
then the
M-P pseudoinverse of $A$ can be defined as $A^{\dagger}:=V\Sigma^{\dagger} U^\top$, where $\Sigma^{\dagger}$
has the shape of the transpose of the diagonal matrix $\Sigma$, and is derived from $\Sigma$
by taking reciprocals of the non-zero (diagonal) elements of $\Sigma$ (i.e., the non-zero
singular values of $A$).
The following theorem gives a fundamental characterization of the M-P pseudoinverse.
\begin{theorem}[\cite{Penrose}]
For $A\in\mathbb{R}^{m \times n}$, the M-P pseudoinverse $A^{\dagger}$ is the unique 
 $H\in\mathbb{R}^{n \times m}$ satisfying:
	\begin{align}
		& AHA = A \label{property1} \tag{P1}\\
		& HAH = H \label{property2} \tag{P2}\\
		& (AH)^{\top} = AH \label{property3} \tag{P3}\\
		& (HA)^{\top} = HA \label{property4} \tag{P4}
	\end{align}
\end{theorem}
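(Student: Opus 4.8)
The plan is to establish the two assertions of the theorem separately: \emph{existence} (the matrix $A^{\dagger}=V\Sigma^{\dagger}U^\top$ defined from the SVD does satisfy all four properties) and \emph{uniqueness} (no other $H$ can). For existence I would simply substitute $H=V\Sigma^{\dagger}U^\top$ into \eqref{property1}--\eqref{property4} and verify each. Two elementary facts make this routine: first, $U$ and $V$ are orthogonal, so $U^\top U=I$ and $V^\top V=I$ and the inner orthogonal factors cancel in every product; second, because $\Sigma^{\dagger}$ is obtained from $\Sigma$ by reciprocating the nonzero diagonal entries, we have $\Sigma\Sigma^{\dagger}\Sigma=\Sigma$ and $\Sigma^{\dagger}\Sigma\Sigma^{\dagger}=\Sigma^{\dagger}$, while $\Sigma\Sigma^{\dagger}$ and $\Sigma^{\dagger}\Sigma$ are diagonal $0/1$ matrices and hence symmetric. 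Thus \eqref{property1} collapses to $U\Sigma\Sigma^{\dagger}\Sigma V^\top=U\Sigma V^\top=A$ and \eqref{property2} to $V\Sigma^{\dagger}\Sigma\Sigma^{\dagger}U^\top=V\Sigma^{\dagger}U^\top=H$, both immediate. Since $AH=U(\Sigma\Sigma^{\dagger})U^\top$ and $HA=V(\Sigma^{\dagger}\Sigma)V^\top$ are conjugations of symmetric matrices by orthogonal matrices, symmetry is preserved, which gives \eqref{property3} and \eqref{property4}.

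For uniqueness I would assume $H_1$ and $H_2$ both satisfy the four properties and prove the single chain $H_1=H_1AH_2=H_2$. The mechanism is to use (P1) and (P2) to insert or delete a factor $A$, and (P3) and (P4) to transpose products of the form $AH$ or $HA$. Concretely, starting from $H_1=H_1AH_1$ (by (P2)), one rewrites $AH_1=(AH_1)^\top=H_1^\top A^\top$ via (P3), then substitutes $A^\top=(AH_2A)^\top=A^\top H_2^\top A^\top$ using (P1) for $H_2$, applies (P3) to both $AH_1$ and $AH_2$, and contracts with (P2) for $H_1$ to arrive at $H_1=H_1AH_2$. A mirror-image computation starting from $H_2=H_2AH_2$, now using (P4) for both matrices and the identity $A=AH_1A$, yields $H_2=H_1AH_2$. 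Equating the two conclusions gives $H_1=H_2$.

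I expect the existence direction to be entirely mechanical once the SVD is in hand, so the only step demanding care is the uniqueness chain. The obstacle there is bookkeeping rather than conceptual depth: one must apply the four identities in exactly the right order so that each factor $A$ introduced by (P1) is eventually absorbed by (P2), and each transpose produced by (P3)/(P4) is matched by a companion factor. I would therefore annotate every rewriting step with the precise property and the specific matrix ($H_1$ or $H_2$) to which it is applied, to guard against the index-free slips that commonly appear in such symbolic cancellations.
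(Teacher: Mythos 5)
Your proof is correct, but a point of context first: the paper does not prove this statement at all --- it is the classical 1955 characterization theorem, quoted with the citation \cite{Penrose}, and the paper's results build on it rather than reprove it. So there is no ``paper proof'' to match; what can be compared is your route versus the route the paper's own machinery would give. Your argument is the standard textbook one, and it checks out: existence by substituting $H=V\Sigma^{\dagger}U^\top$ into (P1)--(P4), using $\Sigma\Sigma^{\dagger}\Sigma=\Sigma$, $\Sigma^{\dagger}\Sigma\Sigma^{\dagger}=\Sigma^{\dagger}$, and the symmetry of the $0/1$ diagonal matrices $\Sigma\Sigma^{\dagger}$ and $\Sigma^{\dagger}\Sigma$; uniqueness by the chain $H_1=H_1AH_2=H_2$, where each insertion of $A^\top=A^\top H_i^\top A^\top$ via (P1), transposition via (P3) or (P4), and contraction via (P2) is applied to the correct matrix --- I verified both halves of your chain and the bookkeeping is sound. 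It is worth observing, however, that the lemmas the paper proves in Section \ref{sec:struc} yield an alternative, unified proof of exactly this theorem: writing $H=V\Gamma U^\top$ with blocks $X,Y,Z,W$, Lemma \ref{lem:P1} shows (P1) holds iff $X=D^{-1}$; given (P1), Lemmas \ref{lem:P3} and \ref{lem:P4} show (P3) and (P4) hold iff $Y=0$ and $Z=0$, and Lemma \ref{lem:P2} shows (P2) holds iff $W=ZDY=0$. Hence $H$ satisfies all four properties iff $\Gamma$ equals $\Sigma^{\dagger}$ in block form, i.e., iff $H=V\Sigma^{\dagger}U^\top$ --- existence and uniqueness in a single stroke, with each Penrose property pinning down one block of $H$. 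Your uniqueness chain has the compensating advantage of being decomposition-free: it never invokes the SVD, so it transfers verbatim to settings where one has only the four identities, whereas the block argument makes the structural content (which the paper exploits in its later theorems) more transparent.
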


Following \cite{RohdeThesis} (also see \cite{BenIsrael1974}), we say that a \emph{generalized inverse} is any $H$ satisfying  \ref{property1}. The property \ref{property1} is particularly important in our context; without it,
the all-zero matrix --- extremely sparse and carrying no information at all about $A$ --- would
satisfy the other three properties.

A generalized inverse is \emph{reflexive} if it satisfies \ref{property2}.  Theorem 3.14 in \cite{RohdeThesis} states that: ($i$) if $H$ is a generalized inverse of $A$, then $\mathrm{rank}(H)\ge\mathrm{rank}(A)$, and ($ii$) a generalized inverse $H$ of $A$ is reflexive if and only if $\mathrm{rank}(H)=\mathrm{rank}(A)$. Therefore, enforcing \ref{property2}
gives us the lowest possible rank of a generalized inverse --- a very desirable property.

Following \cite{XFLPsiam},  we say that $H$ is \emph{ah-symmetric} if it satisfies \ref{property3}. That is, ah-symmetric means that $AH$ is symmetric.
If $H$ is an ah-symmetric generalized inverse, then $\hat{x}:=Hb$ solves $\min\{\|Ax-b\|_2:~x\in\mathbb{R}^n\}$ (see \cite{FFL2016,campbell2009generalized}).
So  not  all of the M-P properties are required for a generalized inverse to solve a key problem.
Even if a given matrix is sparse, its M-P pseudoinverse can be completely dense, often leading to a high computational burden in its applications, especially when we are dealing with high-dimensional matrices. Similarly,  we say that $H$ is \emph{ha-symmetric} if it satisfies \ref{property4}, and ha-symmetric generalized inverses have applications as well. 

It is well known that structured sparsity is more useful than sparsity, because of computational efficiency and for explainability. In the context of the least-squares application of an 
ah-symmetric generalized inverse $H$, it is desirable to have few rows that have non-zeros, as then the associated linear model is more explainable.
Because no generalized inverse of $A$ can have rank less than $A$,  every generalized inverse of $A$ has at least
$\mathrm{rank}(A)$ rows with non-zeros. 
So a \emph{row-sparse} generalized inverse of $A$ 
is one having or nearly having 
$\mathrm{rank}(A)$ rows with non-zeros. 
Similarly, we can speak of \emph{column-sparse} generalized inverses.

\smallskip
\noindent {\bf Literature.} \cite{dokmanic,dokmanic1,dokmanic2} suggested 1-norm minimization for inducing sparsity in left and right-inverses. \cite{FFL2016} suggested using relaxations based 1-norm minimization
on subsets M-P properties aiming at sparse generalized inverses. 
\cite{FampaLee2018ORL} suggested a polynomial-time local search with a performance guarantee, 
aiming at minimizing the 1-norm over block-structured generalized inverses of various types.
\cite{XFLPsiam} extended this to other generalized inverses. 
\cite{FLPXjogo} followed up the work of \cite{XFLPsiam} with a computational study. 
Following the approach of \cite{XFLPsiam}, 
\cite{XFLrank12} carefully analyzed the theory for the cases of rank-1 and rank-2 matrices. 
\cite{ojmoFLP21} studied trading off rank against sparsity for ah-symmetric generalized-inverses, using various optimization models. 

\smallskip 
\noindent {\bf Overview.} In \S\ref{sec:struc}, we establish some structural results concerning 
generalized inverses of different types.
In \S\ref{sec:row} (respectively, \S\ref{sec:col}), we investigate minimizing a component-wise increasing function
of the vector of 2-norms of the rows (resp., columns) of a generalized inverse. 
We establish that every such minimizer satisfies \ref{property2} and \ref{property3} (resp., \ref{property4}). In \S\ref{sec:searching}, we demonstrate the value of our results in 
 \S\ref{sec:row} by demonstrating how mathematical-optimization formulations
 related to finding row-sparse generalized inverses can be solved rather efficiently. 
 In the interest of space, we do not do this also for column-sparse generalized inverses. 
 In \S\ref{sec:ls}, we review the very-fast local-search procedure of \cite{XFLPsiam}
 for finding a row-sparse ah-symmetric generalized inverse with low 1-norm. 
 In \S\ref{sec:exp}, we provide computational results indicating that  the LP-based methods 
 tested in \cite{FLPXjogo} can be made to scale much better, using our results in \S\ref{sec:searching}. Nevertheless, we can observe that the local-search procedure of \cite{XFLPsiam} continues to scale better and maintains other desirable properties. 
 Still, we can see that LP becomes a more viable option than was previously believed. 

\smallskip 
\noindent {\bf Notation.} We use $\mathbf{e}_i$  for the $i$-th standard unit vector.

\section{Decomposing generalized inverses}\label{sec:struc}

Let $A \in \mathbb{R}^{m \times n}$ with rank $r$ and $A = U \Sigma V^\top$ be the singular-value decomposition of $A$, where $U \in \mathbb{R}^{m \times m}$, $V \in \mathbb{R}^{n \times n}$ are orthogonal matrices  ($U^\top U = I_m, V^\top V= I_n$), and $\Sigma \in \mathbb{R}^{m \times n}$ with \[\Sigma := \begin{bmatrix}\underset{\scriptscriptstyle r\times r}{D} & \underset{\scriptscriptstyle r\times (n-r)}{0}\\[1.5ex]
\underset{\scriptscriptstyle (m-r)\times r}{0} & \underset{\scriptscriptstyle (m-r)\times (n-r)}{0}\end{bmatrix}~,\]
with $D$ being a diagonal matrix with rank $r$. 
Let $H \in \mathbb{R}^{n \times m}$ and $V^\top H U = \Gamma$, where
\[\Gamma:= \begin{bmatrix}\underset{\scriptscriptstyle r\times r}{X} & \underset{\scriptscriptstyle r\times (m-r)}{Y}\\[1.5ex]
\underset{\scriptscriptstyle (n-r)\times r}{Z} & \underset{\scriptscriptstyle (n-r)\times (m-r)}{W}\end{bmatrix}~,\]
then $H=I_n H I_m = (VV^\top) H (U^\top U) = V\Gamma U^\top$.

Considering the notation above, we present in the following, results related to the M-P pseudoinverse  properties that will be used to prove our main results in the next section.  

\begin{lemma}\label{lem:P1} {\bf (Structure of generalized inverses).}
 \ref{property1} is equivalent to $H = V\Gamma U^\top$ with $X = D^{-1}$.
\end{lemma}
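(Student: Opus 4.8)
The plan is to substitute the singular-value decomposition directly into \ref{property1} and reduce everything to a block-matrix identity that can be read off entry-by-entry. Writing $A = U\Sigma V^\top$ and $H = V\Gamma U^\top$, I would compute
\[
AHA = (U\Sigma V^\top)(V\Gamma U^\top)(U\Sigma V^\top).
\]
The orthogonality relations $V^\top V = I_n$ and $U^\top U = I_m$ collapse the interior products, so $AHA = U\Sigma\Gamma\Sigma V^\top$. Since $U$ and $V$ are orthogonal (hence invertible), the equation $AHA = A$ is equivalent to $\Sigma\Gamma\Sigma = \Sigma$. This is the key simplification: it replaces a statement about the dense matrices $A$ and $H$ with one about the highly structured $\Sigma$ and the block form $\Gamma$.

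Next I would carry out the block multiplication of $\Sigma\Gamma\Sigma$ using the given partitions. Because $\Sigma$ has only the nonzero block $D$ in its top-left $r\times r$ corner, left-multiplication by $\Sigma$ annihilates all block-rows of $\Gamma$ except the first, and right-multiplication by $\Sigma$ annihilates all block-columns except the first. Thus
\[
\Sigma\Gamma\Sigma = \begin{bmatrix} DXD & 0\\[1ex] 0 & 0\end{bmatrix},
\]
and comparing with the block form of $\Sigma$, whose only nonzero block is $D$, the identity $\Sigma\Gamma\Sigma = \Sigma$ reduces to the single equation $DXD = D$. I would note that the blocks $Y$, $Z$, and $W$ are completely unconstrained by \ref{property1}, which is exactly why a generalized inverse is far from unique.

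Finally, since $D$ is a diagonal matrix of rank $r$ with all diagonal entries nonzero, it is invertible, so $DXD = D$ is equivalent to $X = D^{-1}$ (multiply on both sides by $D^{-1}$). This establishes the forward direction, and because every step is an equivalence — the orthogonality collapse, the block computation, and the cancellation of the invertible $D$ — the converse holds automatically, giving the claimed characterization that \ref{property1} holds if and only if $H = V\Gamma U^\top$ with $X = D^{-1}$.

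I do not expect a genuine obstacle here; the argument is a routine but careful block computation. The one place to be attentive is tracking the dimensions of the four blocks of $\Gamma$ so that the products $DX$ and $XD$ are well-defined (here $X$ is $r\times r$, matching $D$), and confirming that the zero blocks of $\Sigma$ really do kill every contribution from $Y$, $Z$, and $W$ on both sides. Making the "if and only if" explicit — emphasizing that each reduction is reversible because $U$, $V$, and $D$ are all invertible — is the main thing to state cleanly rather than the main difficulty.
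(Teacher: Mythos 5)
Your proposal is correct and follows essentially the same route as the paper's proof: substitute the SVD, use orthogonality of $U$ and $V$ to reduce \ref{property1} to the block identity $\Sigma\Gamma\Sigma = \Sigma$, carry out the block multiplication to get $DXD = D$, and invoke invertibility of $D$ to conclude $X = D^{-1}$, with every step reversible. Your added remark that $Y$, $Z$, $W$ remain unconstrained is a nice observation but does not change the argument.
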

\begin{proof}
\begin{align*}
    &AHA = A\Leftrightarrow\\
    &U\Sigma V^\top V\Gamma U^\top U\Sigma V^\top=U\Sigma V^\top\Leftrightarrow\\
    &U\Sigma \Gamma \Sigma V^\top = U \Sigma V^\top\Leftrightarrow\\
    &U^\top U\Sigma \Gamma \Sigma V^\top V = U^\top U \Sigma V^\top V\Leftrightarrow\\
    &\Sigma \Gamma \Sigma = \Sigma\Leftrightarrow\\
    &\begin{bmatrix}
        D & 0\\
        0 & 0
        \end{bmatrix}\begin{bmatrix}
        X & Y\\
        Z & W
    \end{bmatrix}
    \begin{bmatrix}
        D & 0\\
        0 & 0
    \end{bmatrix} = \begin{bmatrix}
        D & 0\\
        0 & 0
    \end{bmatrix}\Leftrightarrow\\
    &\begin{bmatrix}
        DX & DY\\
        0 & 0
    \end{bmatrix}\begin{bmatrix}
        D & 0\\
        0 & 0
    \end{bmatrix} = \begin{bmatrix}
        D & 0\\
        0 & 0
    \end{bmatrix}\Leftrightarrow\\
    &\begin{bmatrix}
        DXD & 0\\
        0 & 0
    \end{bmatrix} = \begin{bmatrix}
        D & 0\\
        0 & 0
    \end{bmatrix}\Leftrightarrow\\
    &D^{-1}DXDD^{-1} = D^{-1}DD^{-1}\Leftrightarrow\\
    &X = D^{-1}.
\end{align*}

\vspace{-0.2in}

\end{proof}

\begin{lemma}\label{lem:P2}  {\bf (Structure of reflexive generalized inverses).}
  If \ref{property1} is satisfied, then \ref{property2} is equivalent to $H = V\Gamma U^\top$, where $ZDY = W$.
\end{lemma}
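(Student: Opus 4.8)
The plan is to mirror the block computation used for Lemma~\ref{lem:P1}, now reducing \ref{property2} to a condition on the blocks of $\Gamma$. Since we are assuming \ref{property1} holds, Lemma~\ref{lem:P1} already gives $H = V\Gamma U^\top$ with $X = D^{-1}$, and this is precisely the fact I will exploit at the very end.

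First I would rewrite $HAH$ using the decompositions $H = V\Gamma U^\top$ and $A = U\Sigma V^\top$. Because $U$ and $V$ are orthogonal, the interior products $U^\top U$ and $V^\top V$ collapse to identities, giving $HAH = V\Gamma\Sigma\Gamma U^\top$. Comparing this with $H = V\Gamma U^\top$, and then left/right multiplying by $V^\top$ and $U$ (again using $V^\top V = I_n$ and $U^\top U = I_m$), \ref{property2} becomes equivalent to the clean identity $\Gamma\Sigma\Gamma = \Gamma$. It is worth noting that this reduction is an equivalence, not merely an implication, thanks to the invertibility of $U$ and $V$.

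Next I would expand $\Gamma\Sigma\Gamma$ in block form, using the block partition of $\Gamma$ into $X,Y,Z,W$ together with the block structure of $\Sigma$. A short computation yields
\[
\Gamma\Sigma\Gamma = \begin{bmatrix} XDX & XDY \\ ZDX & ZDY \end{bmatrix},
\]
so that $\Gamma\Sigma\Gamma = \Gamma$ is equivalent to the four block equations $XDX = X$, $XDY = Y$, $ZDX = Z$, and $ZDY = W$.

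The final step --- and the only place the hypothesis is genuinely needed --- is to substitute $X = D^{-1}$ coming from \ref{property1}. Then $XDX = D^{-1}DD^{-1} = D^{-1} = X$, $XDY = D^{-1}DY = Y$, and $ZDX = ZDD^{-1} = Z$, so the first three equations hold automatically and carry no information. What remains is exactly $ZDY = W$, which is the asserted characterization. I do not expect any real obstacle here beyond careful bookkeeping; the one point to state explicitly is that \ref{property1} is used solely to discharge the three redundant equations, leaving $ZDY = W$ as the sole nontrivial content of \ref{property2}.
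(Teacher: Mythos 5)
Your proposal is correct and follows essentially the same route as the paper's proof: reduce \ref{property2} to $\Gamma\Sigma\Gamma = \Gamma$ via orthogonality of $U$ and $V$, expand in blocks to get $XDX=X$, $XDY=Y$, $ZDX=Z$, $ZDY=W$, and use $X=D^{-1}$ from Lemma~\ref{lem:P1} to discharge the first three equations. The only difference is presentational --- the paper substitutes $X=D^{-1}$ implicitly inside its chain of block identities, whereas you isolate the four block equations and state explicitly where \ref{property1} is used, which is slightly clearer but mathematically identical.
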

\begin{proof}
    \begin{align*}
        &HAH = H\Leftrightarrow\\
        &V\Gamma U^\top U \Sigma V^\top V\Gamma U^\top = V\Gamma U^\top\Leftrightarrow\\
        &V\Gamma \Sigma \Gamma U^\top = V\Gamma U^\top\Leftrightarrow\\
        &V^\top V\Gamma \Sigma \Gamma U^\top U = V^\top V\Gamma U^\top U\Leftrightarrow\\
        &\Gamma \Sigma \Gamma = \Gamma\Leftrightarrow\\
        &\begin{bmatrix}
        X & Y\\
        Z & W
    \end{bmatrix}\begin{bmatrix}
        D & 0\\
        0 & 0
    \end{bmatrix}\begin{bmatrix}
        X & Y\\
        Z & W
    \end{bmatrix} = \begin{bmatrix}
        X & Y\\
        Z & W
    \end{bmatrix}\Leftrightarrow\\
    &\begin{bmatrix}
        XD & 0\\
        ZD & 0
    \end{bmatrix}\begin{bmatrix}
        X & Y\\
        Z & W
    \end{bmatrix} = \begin{bmatrix}
        X & Y\\
        Z & W
    \end{bmatrix}\Leftrightarrow\\
    &\begin{bmatrix}
        XDX & XDY\\
        ZDX & ZDY
    \end{bmatrix} = \begin{bmatrix}
        X & Y\\
        Z & W
    \end{bmatrix}\Leftrightarrow\\
    &\begin{bmatrix}
        X & Y\\
        Z & ZDY
    \end{bmatrix} = \begin{bmatrix}
        X & Y\\
        Z & W
    \end{bmatrix}\Leftrightarrow\\
    &ZDY = W.
    \end{align*}

    \vspace{-0.2in}
    
\end{proof}

\begin{lemma}\label{lem:P3}  {\bf (Structure of ah-symmetric generalized inverses).}
    If \ref{property1} is satisfied, then \ref{property3} is equivalent to $H = V\Gamma U^\top$, where $Y = 0$.
\end{lemma}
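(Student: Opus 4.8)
The plan is to follow the template established in the proofs of Lemma~\ref{lem:P1} and Lemma~\ref{lem:P2}: rewrite the property \ref{property3} in terms of the singular-value decomposition and the block matrix $\Gamma$, use the orthogonality of $U$ and $V$ to strip away $U$ and $V$, and reduce everything to a condition on the blocks $X,Y,Z,W$. Since \ref{property1} is assumed, Lemma~\ref{lem:P1} already tells us that $X=D^{-1}$, and I intend to use this to simplify the relevant product.

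First I would compute $AH$ directly: writing $A=U\Sigma V^\top$ and $H=V\Gamma U^\top$, orthogonality gives $AH = U\Sigma V^\top V\Gamma U^\top = U\,\Sigma\Gamma\,U^\top$. Because $U$ is orthogonal, the symmetry condition $(AH)^\top = AH$ is equivalent to $U(\Sigma\Gamma)^\top U^\top = U(\Sigma\Gamma)U^\top$, hence (conjugating by $U^\top$ on the left and $U$ on the right) to the statement that the small matrix $\Sigma\Gamma$ is itself symmetric. This is the key reduction; the rest is block bookkeeping.

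Next I would expand
\[
\Sigma\Gamma
= \begin{bmatrix} D & 0 \\ 0 & 0 \end{bmatrix}
  \begin{bmatrix} X & Y \\ Z & W \end{bmatrix}
= \begin{bmatrix} DX & DY \\ 0 & 0 \end{bmatrix},
\]
and then substitute $X=D^{-1}$ from Lemma~\ref{lem:P1}, so that $DX=I_r$ and
\[
\Sigma\Gamma = \begin{bmatrix} I_r & DY \\ 0 & 0 \end{bmatrix}.
\]
Comparing this with its transpose, the diagonal blocks agree automatically, and symmetry holds if and only if the off-diagonal block $DY$ vanishes.

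Finally, since $D$ is an $r\times r$ diagonal matrix of rank $r$, it is invertible, so $DY=0$ if and only if $Y=0$, which yields the claimed equivalence. I do not anticipate a genuine obstacle here: the argument is a direct parallel to the two preceding lemmas, and the only points requiring care are invoking Lemma~\ref{lem:P1} at the right moment to replace $DX$ by $I_r$ (so that the $(1,1)$ block does not contribute a spurious constraint) and then using invertibility of $D$ to pass from $DY=0$ to $Y=0$.
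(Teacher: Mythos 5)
Your proposal is correct and follows essentially the same route as the paper's proof: reduce $(AH)^\top = AH$ to the symmetry of $\Sigma\Gamma$ via orthogonality of $U$, expand in blocks, substitute $X = D^{-1}$ from Lemma~\ref{lem:P1}, and conclude $Y=0$ from the invertibility of $D$. The only cosmetic difference is that you phrase the reduced condition as ``$\Sigma\Gamma$ is symmetric'' while the paper writes it as $\Sigma\Gamma = \Gamma^\top\Sigma^\top$, which is the same statement.
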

\begin{proof}
\begin{align*}
    &AH = (AH)^\top\Leftrightarrow\\
    &U\Sigma V^\top V \Gamma U^\top = (U\Sigma V^\top V \Gamma U^\top)^\top\Leftrightarrow\\
    &U\Sigma \Gamma U^\top = (U\Sigma \Gamma U^\top)^\top\Leftrightarrow\\
    &U\Sigma \Gamma U^\top = U \Gamma^\top \Sigma^\top U^\top\Leftrightarrow\\
    &U^\top U\Sigma \Gamma U^\top U = U^\top U \Gamma^\top \Sigma^\top U^\top U\Leftrightarrow\\
    &\Sigma \Gamma = \Gamma^\top \Sigma^\top \Leftrightarrow\\
    &\begin{bmatrix}
        D & 0\\
        0 & 0
    \end{bmatrix}\begin{bmatrix}
        X & Y\\
        Z & W
    \end{bmatrix} = \begin{bmatrix}
        X^\top  & Z^\top\\
        Y^\top & W^\top
    \end{bmatrix}\begin{bmatrix}
        D^\top & 0\\
        0 & 0
    \end{bmatrix}\Leftrightarrow\\
    &\begin{bmatrix}
        DX & DY\\
        0 & 0
    \end{bmatrix} = \begin{bmatrix}
        X^\top D^\top  & 0\\
        Y^\top D^\top & 0
    \end{bmatrix}\Leftrightarrow\\
    &\begin{bmatrix}
        I_r & DY\\
        0 & 0
    \end{bmatrix} = \begin{bmatrix}
        I_r  & 0\\
        (DY)^\top & 0
    \end{bmatrix}\Leftrightarrow\\
    &Y = 0.
\end{align*}

\vspace{-0.2in}

\end{proof}

\begin{lemma}\label{lem:P4}  {\bf (Structure of ha-symmetric generalized inverses).}
    If \ref{property1} is satisfied, then \ref{property4} is equivalent to $H = V\Gamma U^\top$, where $Z = 0$.
\end{lemma}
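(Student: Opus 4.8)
The plan is to mirror the proof of Lemma~\ref{lem:P3} almost verbatim, exploiting the symmetry between \ref{property3} (which asserts $AH$ is symmetric) and \ref{property4} (which asserts $HA$ is symmetric). The only structural difference is that $\Sigma$ now multiplies $\Gamma$ on the \emph{right} rather than the left, so the off-diagonal block that is forced to vanish will be $Z$ instead of $Y$.

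First I would use the orthogonality relation $U^\top U = I_m$ to simplify
\[
HA = V\Gamma U^\top U \Sigma V^\top = V\Gamma\Sigma V^\top.
\]
Then the condition $(HA)^\top = HA$ reads $V\Gamma\Sigma V^\top = V\Sigma^\top\Gamma^\top V^\top$, and multiplying on the left by $V^\top$ and on the right by $V$ (again using orthogonality) reduces this to the compact equation $\Gamma\Sigma = \Sigma^\top\Gamma^\top$. Next I would expand both sides in block form. Since $\Sigma$ has its only nonzero block $D$ in the top-left position, the product $\Gamma\Sigma$ is supported on the first block-column, with blocks $XD$ (top) and $ZD$ (bottom); dually, $\Sigma^\top\Gamma^\top$ is supported on the first block-row, with blocks $D^\top X^\top$ (left) and $D^\top Z^\top$ (right).

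The final step is to invoke Lemma~\ref{lem:P1}, which gives $X = D^{-1}$ under \ref{property1}. Because $D$ is diagonal we have $D^\top = D$, so the top-left blocks agree automatically: $XD = D^{-1}D = I_r = DD^{-1} = D^\top X^\top$. Matching the remaining off-diagonal blocks then yields $ZD = 0$ (equivalently $D^\top Z^\top = 0$), and since $D$ is invertible this is equivalent to $Z = 0$, as claimed. I do not anticipate any genuine obstacle here: the argument is a routine block computation identical in spirit to Lemma~\ref{lem:P3}. The only point requiring care is bookkeeping—ensuring one tracks that it is the $Z$ block (not $Y$) that must vanish, precisely because multiplication by $\Sigma$ on the right of $\Gamma$ selects the first block-column rather than the first block-row.
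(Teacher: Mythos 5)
Your proposal is correct and follows essentially the same route as the paper's own proof: reduce $(HA)^\top = HA$ to $\Gamma\Sigma = \Sigma^\top\Gamma^\top$ via the orthogonality of $U$ and $V$, expand in blocks, use $X = D^{-1}$ from Lemma~\ref{lem:P1} so the diagonal blocks match automatically, and conclude $ZD = 0$, hence $Z = 0$ by invertibility of $D$. Nothing is missing.
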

\begin{proof}
\begin{align*}
    &HA = (HA)^\top\Leftrightarrow\\     
    &V\Gamma U^\top U \Sigma V^\top = (V\Gamma U^\top U \Sigma V^\top)^\top\Leftrightarrow\\
    &V \Gamma \Sigma V^\top = V\Sigma^\top \Gamma^\top  V^\top\Leftrightarrow\\ 
    &V^\top V \Gamma \Sigma V^\top V = V^\top V\Sigma^\top \Gamma^\top  V^\top V\Leftrightarrow\\
    &\Gamma \Sigma  = \Sigma^\top \Gamma^\top  \Leftrightarrow\\
    &\begin{bmatrix}
        X & Y\\
        Z & W
    \end{bmatrix}\begin{bmatrix}
        D & 0\\
        0 & 0
    \end{bmatrix} = \begin{bmatrix}
        D^\top & 0\\
        0 & 0
    \end{bmatrix}\begin{bmatrix}
        X^\top & Z^\top\\
        Y^\top & W^\top
    \end{bmatrix}\Leftrightarrow\\
    &\begin{bmatrix}
        XD & 0\\
        ZD & 0
    \end{bmatrix} = \begin{bmatrix}
        D^\top X^\top & D^\top Z^\top\\
        0 & 0
    \end{bmatrix}\Leftrightarrow\\
    &\begin{bmatrix}
        I_r & 0\\
        ZD & 0
    \end{bmatrix} = \begin{bmatrix}
        I_r & (ZD)^\top\\
        0 & 0
    \end{bmatrix}\Leftrightarrow\\
    &Z = 0.
    \end{align*}

    \vspace{-0.2in}
    
\end{proof}


\section{Minimizing functions of row 2-norms}\label{sec:row}

We consider the optimization problem 
\begin{align}\tag{\mbox{$P^{f_{\mbox{\tiny{row}}}}_{1}$}}\label{probf2norm}
\min&~f_{\mbox{\tiny{row}}}(\|H_{1\cdot}\|_2\,,\|H_{2\cdot }\|_2\,,\ldots,\|H_{n\cdot }\|_2)\\
\mbox{s.t.}&~ AHA=A\nonumber,
\end{align}
where $f_{\mbox{\tiny{row}}}:\mathbb{R}_+^n\rightarrow \mathbb{R}$ is increasing in each argument.

\begin{theorem}\label{thm:21row}
Suppose that $H^*=V\Gamma U^\top$ is an optimal solution to \rm{\ref{probf2norm}}, then $Y=0,W=0$. 
\end{theorem}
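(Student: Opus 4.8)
The plan is to exploit the structural characterization from Lemma~\ref{lem:P1}: feasibility for \ref{probf2norm} is exactly the condition $X=D^{-1}$, while the blocks $Y$, $Z$, $W$ of $\Gamma$ remain completely free. Since the objective depends on $H$ only through its row $2$-norms, my first step is to rewrite these norms in terms of $\Gamma$. As $U$ is orthogonal, right-multiplication by $U^\top$ preserves the $2$-norm of each row, so $\|H_{i\cdot}\|_2=\|(V\Gamma)_{i\cdot}\|_2$ for every $i$. This eliminates $U$ entirely and reduces the problem to understanding the row norms of $V\Gamma$.

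Next I would split the columns of $\Gamma$ into its first $r$ columns $\left[\begin{smallmatrix} X \\ Z\end{smallmatrix}\right]$ and its last $m-r$ columns $\left[\begin{smallmatrix} Y \\ W\end{smallmatrix}\right]$. Correspondingly $V\Gamma$ splits into the two column blocks $V\left[\begin{smallmatrix} X \\ Z\end{smallmatrix}\right]$ and $V\left[\begin{smallmatrix} Y \\ W\end{smallmatrix}\right]$, so that for each row index $i$,
\[
\|(V\Gamma)_{i\cdot}\|_2^2 = \left\|\left(V\begin{bmatrix} X \\ Z\end{bmatrix}\right)_{i\cdot}\right\|_2^2 + \left\|\left(V\begin{bmatrix} Y \\ W\end{bmatrix}\right)_{i\cdot}\right\|_2^2 .
\]
The key observation is that the two summands are decoupled: the first depends only on $X=D^{-1}$ (fixed by feasibility) and $Z$, whereas the second depends only on $Y$ and $W$, and is nonnegative.

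With this in hand I would argue by substitution. Starting from the optimal $H^*=V\Gamma U^\top$, replace the blocks $Y^*,W^*$ by zero while keeping $X=D^{-1}$ and $Z^*$ unchanged, and call the result $\tilde H$. Lemma~\ref{lem:P1} guarantees that $\tilde H$ is still feasible, and by the displayed identity every row norm satisfies $\|\tilde H_{i\cdot}\|_2\le\|H^*_{i\cdot}\|_2$, since we have merely deleted the nonnegative second summand. Because $f_{\mbox{\tiny{row}}}$ is increasing in each argument, $\tilde H$ is at least as good as $H^*$; optimality of $H^*$ then forces equality in every coordinate, i.e.\ the second summand vanishes for each $i$. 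Collecting these, $V\left[\begin{smallmatrix} Y^* \\ W^*\end{smallmatrix}\right]=0$, and since $V$ is orthogonal (hence invertible) this yields $\left[\begin{smallmatrix} Y^* \\ W^*\end{smallmatrix}\right]=0$, that is, $Y^*=0$ and $W^*=0$.

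The one point requiring care --- and the main obstacle --- is the precise meaning of ``increasing.'' To conclude that the optimal $\Gamma$ \emph{itself} has $Y=W=0$ (rather than merely that some optimal solution does), I need strict monotonicity in each argument: strictness is exactly what promotes ``$\tilde H$ is no worse'' into ``the dropped norms were already zero.'' Under only weak monotonicity, the same argument still shows that $\tilde H$ with $Y=W=0$ is an optimal solution, which is enough for the intended application. I would therefore pin down the monotonicity hypothesis carefully and invoke strictness precisely at this step.
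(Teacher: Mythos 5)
Your proposal is correct and follows essentially the same route as the paper's proof: invoke Lemma~\ref{lem:P1} to fix $X=D^{-1}$, use orthogonality of $U$ to drop $U^\top$ from each row $2$-norm, split $\|H_{i\cdot}\|_2^2$ into the $\left[\begin{smallmatrix} D^{-1}\\ Z\end{smallmatrix}\right]$ and $\left[\begin{smallmatrix} Y\\ W\end{smallmatrix}\right]$ contributions, and use monotonicity of $f_{\mbox{\tiny{row}}}$ plus invertibility of $V$ to force $Y=W=0$; your explicit substitution argument merely spells out the paper's terse final step. Your closing observation about strict versus weak monotonicity is also exactly the content of the paper's remark following the theorem, so the hypothesis as stated (increasing in each argument) already covers the point you flag.
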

\begin{proof}
    From Lemma \ref{lem:P1}, \ref{property1} is equivalent to $H = V\Gamma U^\top$ with $X = D^{-1}$.
    Then 
\begin{align*}
    H_{i\cdot} &= \mathbf{e}_i^\top V \begin{bmatrix}
        D^{-1} & Y\\
        Z & W
    \end{bmatrix}U^\top~,
\end{align*}
for $i=1,\ldots,n$. Note that 
\begin{equation}\label{proofth}
    \begin{array}{ll}
    \|H_{i \cdot }\|_{2} &= \left\|\mathbf{e}_i^\top V \begin{bmatrix}
        D^{-1} & Y\\
        Z & W\end{bmatrix}U^\top\right\|_2\\
        &= \left\|\mathbf{e}_i^\top V \begin{bmatrix}
        D^{-1} & Y\\
        Z & W
\end{bmatrix}\right\|_2\\
&= \left\| \begin{bmatrix}
        \mathbf{e}_i^\top V \begin{bmatrix}
            D^{-1}\\
            Z
        \end{bmatrix} &         \mathbf{e}_i^\top V \begin{bmatrix}
            Y\\
            W
        \end{bmatrix}
\end{bmatrix}\right\|_2\\
&= \sqrt{
        \left\|\mathbf{e}_i^\top V \begin{bmatrix}
            D^{-1}\\
            Z
        \end{bmatrix}\right\|_2^2 + \left\|\mathbf{e}_i^\top V \begin{bmatrix}
            Y\\
            W
        \end{bmatrix}\right\|_2^2
}~.
\end{array}
\end{equation}
Note that $Y$ and $W$ are variables in \ref{probf2norm}, and its objective  is to minimize $f_{\mbox{\tiny{row}}}$, which is increasing in each argument $\|H_{i\cdot}\|_2$\,, for $i=1,\ldots,n$.

Then, as
        $\left\|\mathbf{e}_i^\top V \begin{bmatrix}
            Y\\
            W
        \end{bmatrix}\right\|_2^2 \geq 0$, we must have  $\mathbf{e}_i^\top V\begin{bmatrix}
            Y\\
            W
        \end{bmatrix}=0$, for all $i=1\ldots,n$, in  an optimal solution of \rm{\ref{probf2norm}}, i.e., we must have $Y\! = \!0$ and $W\! =\! 0$.
\end{proof}

\begin{remark}
    The proof of Theorem \ref{thm:21row} is not valid for other inner norms in the objective of \rm{\ref{probf2norm}}, because we can only remove the matrix $U$ from the norm in \eqref{proofth} if we have the 2-norm.
\end{remark}

\begin{remark}
    If {\rm$f_{\mbox{\tiny{row}}}$} is non-decreasing instead of increasing in each argument,  we cannot say that  $Y=0$ and $W=0$ at every optimal solution of \rm{\ref{probf2norm}}. However, we still have that $Y=0$ and $W=0$ at some optimal solution.  
\end{remark}

\begin{proposition}
Suppose that $H=V\Gamma U^\top$ is an optimal solution to \rm{\ref{probf2norm}}. Then, 
 \ref{property2} and \ref{property3} are satisfied.
\end{proposition}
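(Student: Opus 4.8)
The plan is to read this off almost immediately from Theorem \ref{thm:21row} together with the structural lemmas, since the analytic work has already been done. First I would note that an optimal solution of \ref{probf2norm} is in particular feasible, so it satisfies \ref{property1}; this is important because it is the standing hypothesis of Lemmas \ref{lem:P2} and \ref{lem:P3}, which characterize \ref{property2} and \ref{property3} only \emph{under} \ref{property1}. By Lemma \ref{lem:P1}, feasibility also gives us $X = D^{-1}$, pinning down the $(1,1)$-block of $\Gamma$.

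Next I would invoke Theorem \ref{thm:21row}, which yields $Y = 0$ and $W = 0$ at the optimum. The conclusion $Y = 0$ is exactly the condition in Lemma \ref{lem:P3} equivalent to \ref{property3} (given \ref{property1}), so \ref{property3} follows at once. For \ref{property2}, Lemma \ref{lem:P2} tells us that, given \ref{property1}, the property is equivalent to $W = ZDY$. Here I would simply substitute $Y = 0$: the right-hand side collapses to $ZD\cdot 0 = 0$, while Theorem \ref{thm:21row} gives $W = 0$, so the identity $W = ZDY$ holds trivially. Hence \ref{property2} is satisfied as well.

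I do not anticipate a genuine obstacle, since the real content is packaged in Theorem \ref{thm:21row}; the proposition is essentially a corollary. The only point requiring a moment's care is the logical bookkeeping: each structural equivalence is conditional on \ref{property1}, so I must first establish \ref{property1} from feasibility before applying Lemmas \ref{lem:P2} and \ref{lem:P3}. The mild conceptual observation worth stating is that the vanishing of both $Y$ and $W$ is precisely what makes the reflexivity condition $W = ZDY$ degenerate to the trivially-true $0 = 0$, so that minimizing a component-wise increasing function of the row $2$-norms automatically forces reflexivity and ah-symmetry simultaneously, without any further computation.
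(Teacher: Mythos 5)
Your proposal is correct and follows essentially the same route as the paper's own proof: establish \ref{property1} from feasibility, use Lemma \ref{lem:P1} to get $X = D^{-1}$, apply Theorem \ref{thm:21row} to get $Y=0$ and $W=0$, and then conclude via Lemmas \ref{lem:P2} and \ref{lem:P3}. Your explicit check that $W = ZDY$ reduces to $0=0$ is a small amount of bookkeeping the paper leaves implicit, but it is the same argument.
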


\begin{proof}
    As $H$ satisfies \ref{property1},
    we have from Lemma \ref{lem:P1} that $X = D^{-1}$. From Theorem \ref{thm:21row} we show that $Y = 0$, $W = 0$, so from Lemmas \ref{lem:P2} and \ref{lem:P3} we have that 
    \ref{property2} and \ref{property3} are satisfied.
\end{proof}

\section{Minimizing functions of column 2-norms}\label{sec:col}

We consider the optimization problem 
\begin{align}\tag{\mbox{$P^{f_{\mbox{\tiny{col}}}}_{1}$}}\label{probf2normcol}
\min&~f_{\mbox{\tiny{col}}}(\|H_{\cdot 1}\|_2\,,\|H_{\cdot 2 }\|_2\,,\ldots,\|H_{\cdot m}\|_2)\\
\mbox{s.t.}&
\quad  AHA=A\nonumber,
\end{align}
where $f_{\mbox{\tiny{col}}}:\mathbb{R}_+^m\rightarrow \mathbb{R}$ is increasing in each argument. 


\begin{theorem}\label{thm:21col}
Suppose $H^*=V\Gamma U^\top$ is an optimal solution to {\rm \ref{probf2normcol}}, then $Z=0,W=0$.
\end{theorem}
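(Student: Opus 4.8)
The plan is to mirror exactly the argument used for Theorem~\ref{thm:21row}, but working with columns of $H$ instead of rows, and exploiting the orthogonality of $V$ (rather than $U$) to peel the orthogonal factor off the inner $2$-norm. First I would invoke Lemma~\ref{lem:P1} to record that \ref{property1} forces $X = D^{-1}$, so that
\[
H = V\begin{bmatrix} D^{-1} & Y\\ Z & W\end{bmatrix}U^\top,
\]
with $Y,Z,W$ free. The $j$-th column of $H$ is $H_{\cdot j} = V\Gamma U^\top \mathbf{e}_j$, and since $U$ is orthogonal I can write the full matrix as $H = V\Gamma U^\top$ and examine each column as $V\Gamma(U^\top\mathbf{e}_j)$.

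The key step is the norm computation, analogous to \eqref{proofth}. Because $V^\top V = I_n$, left-multiplication by the orthogonal matrix $V$ preserves the $2$-norm, so
\[
\|H_{\cdot j}\|_2 = \left\|V\,\Gamma\, U^\top\mathbf{e}_j\right\|_2 = \left\|\Gamma\, U^\top\mathbf{e}_j\right\|_2.
\]
Then I would split $\Gamma$ by its block rows: the top block rows involve $\begin{bmatrix} D^{-1} & Y\end{bmatrix}$ and the bottom block rows involve $\begin{bmatrix} Z & W\end{bmatrix}$. Writing $U^\top\mathbf{e}_j$ as the $j$-th column of $U^\top$, the squared norm separates additively as
\[
\|H_{\cdot j}\|_2^2 = \left\|\begin{bmatrix} D^{-1} & Y\end{bmatrix}U^\top\mathbf{e}_j\right\|_2^2 + \left\|\begin{bmatrix} Z & W\end{bmatrix}U^\top\mathbf{e}_j\right\|_2^2,
\]
where now the second term is the one containing the free blocks $Z$ and $W$ that we wish to drive to zero.

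Finally, since $f_{\mbox{\tiny{col}}}$ is increasing in each argument and each squared-norm term is nonnegative, optimality forces the second term to vanish for every $j$: that is, $\begin{bmatrix} Z & W\end{bmatrix}U^\top\mathbf{e}_j = 0$ for all $j=1,\ldots,m$. Because $U$ is invertible, the columns $U^\top\mathbf{e}_j$ span $\mathbb{R}^m$, so we conclude $\begin{bmatrix} Z & W\end{bmatrix} = 0$, i.e.\ $Z = 0$ and $W = 0$. The main obstacle, as flagged in the remark following Theorem~\ref{thm:21row}, is that this clean separation relies critically on using the $2$-norm inside $f_{\mbox{\tiny{col}}}$; it is the only norm under which the orthogonal factor $V$ can be stripped off without distortion. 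The one genuine asymmetry with the row case is that here the orthogonal factor we remove is $V$ (on the left) rather than $U$, and the free blocks we eliminate are $Z$ and $W$ rather than $Y$ and $W$, which is exactly what yields the ha-symmetry conclusion (via Lemma~\ref{lem:P4}) in place of ah-symmetry.
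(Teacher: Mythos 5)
Your proposal is correct and follows essentially the same route as the paper's own proof: invoke Lemma~\ref{lem:P1} to fix $X = D^{-1}$, strip the orthogonal factor $V$ off $\|H_{\cdot j}\|_2$, split the squared norm into the $\begin{bmatrix} D^{-1} & Y\end{bmatrix}$ and $\begin{bmatrix} Z & W\end{bmatrix}$ blocks, and use monotonicity of $f_{\mbox{\tiny{col}}}$ to force $\begin{bmatrix} Z & W\end{bmatrix}U^\top\mathbf{e}_j = 0$ for all $j$, hence $Z = W = 0$. Your explicit remark that invertibility of $U$ is what converts the columnwise vanishing into $\begin{bmatrix} Z & W\end{bmatrix} = 0$ is a small clarification the paper leaves implicit, but the argument is the same.
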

\begin{proof}
    From Lemma \ref{lem:P1}, \ref{property1} is equivalent to $H = V\Gamma U^\top$ with $X = D^{-1}$.
    Then, 
\begin{align*}
    H_{\cdot i } &=  V \begin{bmatrix}
        D^{-1} & Y\\
        Z & W
    \end{bmatrix}U^\top \mathbf{e}_i~,
\end{align*}
for  $i=1,\ldots,m$. Note that 
\begin{align*}
    \|H_{\cdot i}\|_{2} \!&= \left\|V \begin{bmatrix}
        D^{-1} & Y\\
        Z & W\end{bmatrix}U^\top \mathbf{e}_i \right\|_2\\
        &= \left\| \begin{bmatrix}
        D^{-1} & Y\\
        Z & W
\end{bmatrix} U ^\top \mathbf{e}_i\right\|_2\\
&= \left\| \begin{bmatrix}
        \begin{bmatrix}
            D^{-1} & Y
        \end{bmatrix}U ^\top \mathbf{e}_i\\     \begin{bmatrix}
            Z &   W
        \end{bmatrix}U ^\top \mathbf{e}_i
\end{bmatrix}\right\|_2\\
&= \sqrt{
        \left\| \begin{bmatrix}
            D^{-1} & Y
        \end{bmatrix}U ^\top \mathbf{e}_i\right\|_2^2 \!+\! \left\| \begin{bmatrix}
            Z &  W
        \end{bmatrix}U ^\top \mathbf{e}_i\right\|_2^2\,.
}
\end{align*}

Note that $Z$ and $W$ are variables in \ref{probf2normcol} and its objective  is to minimize $f_{\mbox{\tiny{col}}}$, which is increasing in each argument $\|H_{\cdot i}\|_2$, for $i=1,\ldots,m$. 
Then, as
$\left\| \begin{bmatrix}
            Z &  W
            \end{bmatrix} U^\top \mathbf{e}_i\right\|_2^2 \!\geq\! 0$, we must have  that $\begin{bmatrix}
            Z & W
    \end{bmatrix}U^\top \mathbf{e}_i\!=\!0$, for all $i=1,\ldots,m$, in an optimal solution of \ref{probf2normcol},  i.e., we must have  $Z = 0$ and $W = 0$.
\end{proof}

\begin{proposition}
Suppose $H=V\Gamma U^\top$ is an optimal solution to {\rm \ref{probf2normcol}}. Then 
\ref{property2} and \ref{property4} are satisfied.
\end{proposition}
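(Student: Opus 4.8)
The plan is to mirror exactly the structure already used for the row-sparse case in the previous proposition, since the column case is its transpose analogue. First I would invoke the hypothesis that $H=V\Gamma U^\top$ is an optimal solution to \ref{probf2normcol}, which in particular satisfies the constraint \ref{property1}. By Lemma \ref{lem:P1}, this immediately gives $X = D^{-1}$, fixing the top-left block of $\Gamma$ without any further work.

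Next I would apply Theorem \ref{thm:21col}, which is the substantive result just proved: at an optimal solution of \ref{probf2normcol} we have $Z = 0$ and $W = 0$. So at this stage the block structure of $\Gamma$ is fully pinned down as $X = D^{-1}$, $Z = 0$, $W = 0$, with only $Y$ free. The remaining task is purely to read off the two M-P properties from the structural lemmas.

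For \ref{property2}, I would use Lemma \ref{lem:P2}: assuming \ref{property1} holds, reflexivity is equivalent to $W = ZDY$. Since $Z = 0$, the right-hand side is $0$, and since $W = 0$ as well, the identity $W = ZDY$ holds trivially; hence \ref{property2} is satisfied. For \ref{property4}, I would invoke Lemma \ref{lem:P4}: assuming \ref{property1} holds, ha-symmetry is equivalent to $Z = 0$, which is exactly what Theorem \ref{thm:21col} provides. Therefore \ref{property4} holds directly.

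I do not anticipate any real obstacle here, as this is a short corollary-style assembly of earlier results rather than a genuinely new argument; the only point requiring a moment's care is confirming that $Z=0$ together with $W=0$ makes the reflexivity condition $W=ZDY$ hold (both sides vanish), rather than needing to compute $ZDY$ nontrivially. The proof is essentially the transpose twin of the row-case proposition, with the roles of $Y$ and $Z$ (and Lemmas \ref{lem:P3} and \ref{lem:P4}) interchanged.
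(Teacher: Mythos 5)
Your proposal is correct and follows exactly the paper's own proof: invoke Lemma \ref{lem:P1} for $X=D^{-1}$, Theorem \ref{thm:21col} for $Z=0$, $W=0$, and then read off \ref{property2} and \ref{property4} from Lemmas \ref{lem:P2} and \ref{lem:P4}. Your added remark that the reflexivity condition $W=ZDY$ holds because both sides vanish is a slightly more explicit spelling-out of the same step.
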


\begin{proof}
     As $H$ satisfies \ref{property1},
     we have from Lemma \ref{lem:P1} that $X = D^{-1}$. From Theorem \ref{thm:21col} we show that $Z = 0$, $W = 0$, so from Lemmas \ref{lem:P2} and \ref{lem:P4} we note that \ref{property2} and \ref{property4} are satisfied.
\end{proof}

\section{Searching for sparse ah-symmetric reflexive generalized inverses}\label{sec:searching}


Next, aiming at sparse and block-structured ah-symmetric reflexive generalized inverses, we consider a special case of \ref{probf2norm}, where $f_{\mbox{\tiny{row}}}$ is the 1-norm and, therefore, we minimize the 2,1-norm of $H$.  

\begin{align*}
\tag{\mbox{$P_1^{2,1}$}}
\label{prob:C}
z(P_1^{2,1})
:=\min&\; \|H\|_{2,1}\\
\mbox{s.t.}&~  AHA=A.
\end{align*}

Minimizing the 1-norm of a matrix has been widely applied in the literature as a surrogate to minimizing the 0-norm, or the sparsity.  So we also consider the two  following alternative formulations to obtain $H$.

\begin{align*}\
\tag{\mbox{$(P_1^{2,1})^1$}}
\label{prob:A}
z((P_1^{2,1})^1)
:=\min&\;\|H\|_{1}\\
\mbox{s.t.}& ~ AHA=A,\\
&\; \|H\|_{2,1}\leq 
z(P_1^{2,1}).
\end{align*}

\begin{align*}
\tag{\mbox{$P^1_{123}$}}
\label{prob:B}
z(P^1_{123})
:=\min&\; \|H\|_{1}\\
\mbox{s.t.}& ~ AHA=A,\\
&\; HAH=A,\\
&\; AH=(AH)^\top.
\end{align*}




From \S\ref{sec:struc}, we have that the solution $H \in \mathbb{R}^{n \times m}$ of the three formulations \ref{prob:C}, \ref{prob:A} and \ref{prob:B}\,, can be written as $H=V\Gamma U^\top$, where
\[\Gamma:= \begin{bmatrix}\underset{\scriptscriptstyle r\times r}{D^{-1}} & \underset{\scriptscriptstyle r\times (m-r)}{0}\\[1.5ex]
\underset{\scriptscriptstyle (n-r)\times r}{Z} & \underset{\scriptscriptstyle (n-r)\times (m-r)}{0}\end{bmatrix}.\]
Let 
\[V:= \begin{bmatrix}\underset{\scriptscriptstyle n\times r}{V_1} & \underset{\scriptscriptstyle n\times (n-r)}{V_2}\end{bmatrix}~, \, U:= \begin{bmatrix}\underset{\scriptscriptstyle m\times r}{U_1} & \underset{\scriptscriptstyle m\times (m-r)}{U_2}\end{bmatrix}~, \, G:=V_1D^{-1}U_1^\top\,.
\]

Therefore, formulations \ref{prob:C}, \ref{prob:A} and \ref{prob:B} can be reformulated \emph{much more tractably} as, respectively,

\begin{equation}\tag{\mbox{$\mathcal{P}_1^{2,1}$}}\label{prob:barC}
z(\mathcal{P}_1^{2,1}):=\min\; \sum_{i=1}^n 
\left\|\mathbf{e}_i^\top V \begin{bmatrix}
            D^{-1}\\
            Z
        \end{bmatrix}\right\|_2
\end{equation}

\vspace{0.2in}

\begin{equation}\tag{\mbox{$(\mathcal{P}_1^{2,1})^1$}}\label{prob:barA}
\begin{array}{rrl}
z((\mathcal{P}_1^{2,1})^1):=\!\!\!&\min&\left\| V \begin{bmatrix}
        D^{-1} & 0\\
        Z & 0\end{bmatrix}U^\top\right\|_1\\ [15pt]
&\mbox{s.t.}&\displaystyle\sum_{i=1}^n  \left\|\mathbf{e}_i^\top V \begin{bmatrix}
            D^{-1}\\
            Z
        \end{bmatrix}\right\|_2\leq z(\mathcal{P}_1^{2,1}).
        \\[20pt]
        = &\!\!\min\limits_{{F\in\mathbb{R}^{n\times m},}\atop{Z\in\mathbb{R}^{(n-r)\times r}}} &\sum_{i=1}^{n-r}\sum_{j=1}^r F_{ij}\\[5pt]
        &\mbox{s.t.}& F -  V_2ZU_1^\top\leq G,\\[5pt]
       && F +  V_2ZU_1^\top\geq -G,\\[5pt]
       &&\displaystyle\sum_{i=1}^n  \left\|\mathbf{e}_i^\top V \begin{bmatrix}
            D^{-1}\\
            Z
        \end{bmatrix}\right\|_2\leq z(\mathcal{P}_1^{2,1}).
\end{array}
\end{equation}

\vspace{0.2in}

\begin{equation}\tag{\mbox{$\mathcal{P}^1_{123}$}}\label{prob:barB}
\begin{array}{rrl}
z(\mathcal{P}^1_{123}):=&\min& \left\| V \begin{bmatrix}
        D^{-1} & 0\\
        Z & 0\end{bmatrix}U^\top\right\|_1 \\[15pt]
        = &\min &\left\| G + V_2ZU_1^\top\right\|_1\\
        [10pt]
        = &\min\limits_{{F\in\mathbb{R}^{n\times m},}\atop{Z\in\mathbb{R}^{(n-r)\times r}}} &\sum_{i=1}^{n-r}\sum_{j=1}^r F_{ij}\\[5pt]
        &\mbox{s.t.}& F -  V_2ZU_1^\top\leq G,\\[5pt]
       && F +  V_2ZU_1^\top\geq -G.
\end{array}
\end{equation}

\section{Local-search procedure}\label{sec:ls}
    

\cite{XFLPsiam} devised a local-search procedure, working with
generalized inverses having
minimum row sparsity,
to find  an approximate 1-norm minimizing ah-symmetric generalized inverse.
\cite{FLPXjogo} demonstrated the computational effectiveness of their algorithm, versus the alternative
of solving \ref{prob:B}\,. In light of our new
formulations \ref{prob:barC}, \ref{prob:barA} and \ref{prob:barB},
it is worth conducting new experiments, comparing \cite{XFLPsiam}
and our new formulations. In the remainder of this section, we review the local search of \cite{XFLPsiam}, and in the next section, we present our computational results.

\begin{definition}\label{def:localsearch_ahsym}
For $A\!\in\!\mathbb{R}^{m\times n}$, let $r \!:= \!\rank(A)$. Let $S$ be any ordered subset of $r$ elements from $\{1,\dots,m\}$ such that these $r$ rows of $A$ are linearly independent. For $T$ an ordered subset of $r$ elements from $\{1,\dots,n\}$,    
if   $|\det(A[S,T])|$ cannot be increased by swapping an element of $T$ with one from its complement,  we say that $A[S,T]$ is a 
local maximizer for the absolute determinant on the set of $r\times r$ nonsingular submatrices of $A[S,:]$.
\end{definition}


\begin{theorem}[\cite{XFLPsiam}]\label{thm:ahconstruction}
For $A\!\in\!\mathbb{R}^{m\times n}$, let $r \!:= \!\rank(A)$. For any $T$, an ordered subset of $r$ elements from $\{1,\dots,n\}$, let $\hat{A}\!:=\!A[:,T]$ be the $m \times r$ submatrix of $A$ formed by columns $T$. If $\rank(\hat{A})\!=\!r$, let
$
\hat{H}\! := \!\hat{A}^{\dagger}\! \!=\!(\hat{A}^\top\hat{A})^{-1}\hat{A}^\top.
$
The $n \times m$ matrix $H$ with all rows equal to zero, except rows $T$, which are given by $\hat{H}$, is an ah-symmetric reflexive generalized inverse of $A$.
\end{theorem}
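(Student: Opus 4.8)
The plan is to reduce every property to the well-known behaviour of the M-P pseudoinverse of the full-column-rank matrix $\hat{A}$, using a column-selection matrix to encode the row placement inside $H$. Write $T=(T_1,\ldots,T_r)$ and let $P:=[\mathbf{e}_{T_1},\ldots,\mathbf{e}_{T_r}]\in\mathbb{R}^{n\times r}$ be the matrix whose columns are the corresponding standard unit vectors of $\mathbb{R}^n$. Then right-multiplication by $P$ selects columns $T$, so $\hat{A}=AP$, while left-multiplication by $P$ scatters the rows of $\hat{H}$ into rows $T$ of an otherwise-zero matrix, so $H=P\hat{H}$. Since the $T_i$ are distinct, $P^\top P=I_r$. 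Because $\hat{A}$ has full column rank $r$, the matrix $\hat{A}^\top\hat{A}$ is invertible and $\hat{H}=\hat{A}^{\dagger}$ is a left inverse, $\hat{H}\hat{A}=(\hat{A}^\top\hat{A})^{-1}\hat{A}^\top\hat{A}=I_r$; moreover $\hat{A}\hat{A}^{\dagger}$ is the orthogonal projector onto $\operatorname{col}(\hat{A})$ and hence symmetric.

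With this notation I would verify the three properties by short manipulations. For \ref{property3}: $AH=AP\hat{H}=\hat{A}\hat{A}^{\dagger}$, which is symmetric, so $H$ is ah-symmetric. For \ref{property2}: $HAH=P\hat{H}\,A\,P\hat{H}=P\hat{H}\hat{A}\hat{H}=P(\hat{H}\hat{A})\hat{H}=PI_r\hat{H}=P\hat{H}=H$, using the left-inverse identity $\hat{H}\hat{A}=I_r$, so $H$ is reflexive. For \ref{property1}: $AHA=\hat{A}\hat{A}^{\dagger}A$, and it remains to show that $\hat{A}\hat{A}^{\dagger}$ acts as the identity on every column of $A$.

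The crux is exactly this last point, and it is where the hypothesis $\rank(\hat{A})=r$ is indispensable. Since $\hat{A}\hat{A}^{\dagger}$ is the orthogonal projection onto $\operatorname{col}(\hat{A})$, the identity $\hat{A}\hat{A}^{\dagger}A=A$ holds if and only if every column of $A$ lies in $\operatorname{col}(\hat{A})$, i.e. $\operatorname{col}(A)\subseteq\operatorname{col}(\hat{A})$. But $\hat{A}$ consists of $r=\rank(A)$ linearly independent columns of $A$, so these columns form a basis of $\operatorname{col}(A)$, giving $\operatorname{col}(\hat{A})=\operatorname{col}(A)$ and therefore $\hat{A}\hat{A}^{\dagger}A=A$. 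This establishes \ref{property1} and completes the argument. As a consistency check on reflexivity, one could alternatively invoke the Rohde criterion recalled in \S\ref{sec:introd}: $\rank(H)=\rank(P\hat{H})=r=\rank(A)$, which forces \ref{property2}.
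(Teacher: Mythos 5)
Your proof is correct, but note that the paper itself offers no proof to compare against: Theorem~\ref{thm:ahconstruction} is stated as imported from \cite{XFLPsiam}, so you have supplied an argument where the paper has only a citation. Your route is also independent of the machinery the paper builds in \S\ref{sec:struc}: rather than passing to the singular-value decomposition and checking the block conditions $X=D^{-1}$, $Y=0$, $ZDY=W$ of Lemmas \ref{lem:P1}--\ref{lem:P3}, you encode the row-placement with the selection matrix $P$, reduce everything to the identities $\hat{H}\hat{A}=I_r$ and the symmetry of the projector $\hat{A}\hat{A}^{\dagger}$, and close \ref{property1} with the column-space equality $\operatorname{col}(\hat{A})=\operatorname{col}(A)$ --- which is indeed the one place the hypothesis $\rank(\hat{A})=r$ is essential, as you correctly flag. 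All steps check out: $\hat{A}=AP$, $H=P\hat{H}$, $P^\top P=I_r$ are exactly right, the verification of \ref{property2} and \ref{property3} is immediate from them, and the Rohde rank criterion you cite as a consistency check is legitimate once \ref{property1} is in hand (since $P$ has orthonormal columns, $\rank(H)=\rank(\hat{H})=r$). The selection-matrix argument has the advantage of being elementary and self-contained, making the role of each hypothesis transparent; the paper's SVD framework, by contrast, is designed for the optimization results of \S\ref{sec:row}--\S\ref{sec:col} and would be an awkward fit for this particular block-structured construction, since the sparsity pattern of $H$ is not naturally visible in the coordinates $V\Gamma U^\top$.
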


In \cite{XFLPsiam}, a simple local-search procedure was proposed to obtain  a matrix $\tilde{A}:=A[S,T]$ which is  a 
local maximizer for the absolute determinant on the set of $r\times r$ non-singular submatrices of $A[S,:]$, for a given ordered subset $S$ of $r$ elements from $\{1,\dots,m\}$ such that the rows of $A[S,:]$ are linearly independent. Then, an $n \times m$ ah-symmetric reflexive generalized inverse $H$ is constructed over $\hat{A}:=A[:,T]$,   by Theorem \ref{thm:ahconstruction}.
\cite{XFLPsiam} described how to make this algorithm run in polynomial time, and demonstrated that the resulting $H$
(which has optimal row-sparsity) has 1-norm within a factor of $r$
of the 1-norm minimizing solution). In fact, \cite{FLPXjogo} demonstrated that the factor of $r$ is actually very pessimistic, with much better results obtained in practice.

\section{Numerical experiments}\label{sec:exp}



We constructed eleven test instances
of varying sizes for our numerical experiments using the Matlab function \texttt{sprand} to  randomly generate $m\times n$ dense matrices $A$ with  rank $r$, as described in \cite[\S 2.1]{FLPXjogo}. We used  \texttt{Gurobi} to solve  \ref{prob:B} and \ref{prob:barB} as linear programs, and \texttt{Mosek} to solve \ref{prob:barC} and  \ref{prob:barA}
as second-order cone programs; see \cite{mosek} and \cite{gurobi}.
We ran our experiments on a
16-core machine (running Windows Server 2016 Standard):
two Intel Xeon CPU E5-2667 v4 processors
running at 3.20GHz, with 8 cores each, and 128 GB of memory.

In Table \ref{tab:results} we compare results for problems \ref{prob:barC}, \ref{prob:barA}, \ref{prob:barB}\,, and for the local-search procedure (LS) proposed   in \cite{XFLPsiam} (see \S \ref{sec:ls}) . We also compare  \ref{prob:barB} to \ref{prob:B} to show how effective the problem reduction is.
We set a time limit of 5 hours for solving each instance.  In the first two columns of Table \ref{tab:results}, we identify, respectively, $m,n,r$ for each instance, and the procedure used to compute $H$. For each instance/procedure, we show the number of non-zero rows of $H$ (NZR), the zero-norm, the 1-norm, and the 2,1-norm  of $H$ (
 $\|H\|_0$, $\|H\|_1$,  and $\|H\|_{2,1}$),  and the elapsed time to solve the problem, or run the local-search procedure,  in seconds (Time). 
We consider $10^{-5}$ as the tolerance to distinguish non-zero elements. The symbol `*'  in column `Time', indicates   that the problem was not solved to optimality either because the time limit was reached or because we ran out of  memory. 


We could only solve \ref{prob:B}  for $n\leq 120$, while \ref{prob:barB} could be solved for all $n\leq 280$, and when both problems were solved, the latter was solved much faster, showing the impact of reformulating the problem in the reduced format.

We see that the 1-norm works well as a  surrogate for  sparsity, as the decrease in the 1-norm always leads to a decrease on the zero-norm, when comparing \ref{prob:barC} or \ref{prob:barA} to \ref{prob:barB} or \ref{prob:B}\,. 

We  see that, although we have distinct solutions for \ref{prob:barC} and \ref{prob:barA}, the difference between their 1-norms is small, showing that, for our instances, the minimization of the 2,1-norm in \ref{prob:barC} already leads to solutions with the 1-norm close to  minimum.  

We see that the 2,1-norm works well as a surrogate  to  the number of non-zero rows, as the decrease on the 2,1-norm always leads to a decrease on `NZR', when comparing \ref{prob:barB} or \ref{prob:B} to \ref{prob:barC} or \ref{prob:barA}. 

If  it is important to obtain $H$ with small  2,1-norm or even 1-norm, problem \ref{prob:barC} is a good choice, as it can be solved very efficiently, when compared to the other norm-minimization problems considered.

Finally, as pointed out in \cite{XFLPsiam},  the  local-search procedure is very fast compared to the solution of the norm-minimization problems, and leads to solutions with much smaller zero-norms and number of non-zero rows. If sparsity and block structure are the main goals, then we conclude from our experiments, that  the local-search procedure is the best option. However,  both the 2,1-norm and the 1-norm of the solutions obtained are not close to the minimum possible value. On the other hand, we know from \cite{XFLPsiam}, that the factor between the 1-norm of the local-search solution and the solution of  \ref{prob:B} is bounded by $r$. In \cite{XFLPsiam}, we showed that although this bound is achieved in the worst case, for the random instances used in those experiments,  the factor was much smaller than the bound. Here, we confirm this result on the bigger instances that we could solve, owing to  the reduction obtained on the reformulation of \ref{prob:B} as \ref{prob:barB}\,,  the factors computed for all instances with $n\leq 280$, is less than $1.6$, while the rank goes up to 70.

\begin{table}[!ht]
\tiny
\centering
\begin{tabular}{l|l|rrrrr}
$m,n,r$                          & \multicolumn{1}{c|}{Prob} & NZR & $\|H\|_0$ & $\|H\|_1$ & $\|H\|_{2,1}$ &  Time (sec) \\ \hline
\multirow{5}{*}{$40,20,10$}      & $\mathcal{P}_1^{2,1}$     & 15                      & 569                           & 62.409                        & 14.390                                                      & 0.01                           \\
                                 & $(\mathcal{P}_1^{2,1})^1$ & 15                      & 569                           & 62.389                        & 14.390                                                    & 0.14                           \\
                                 & $\mathcal{P}^1_{123}$     & 16                      & 542                           & 59.985                        & 14.960                                                    & 0.27                           \\
                                 & ${P}^1_{123}$             & 16                      & 542                           & 59.985                        & 14.960                                                      & 7.98                           \\
                                 & LS                        & 10                      & 380                           & 72.551                        & 16.393                                                       & 0.01                           \\ \hline
\multirow{5}{*}{$80,40,20$}      & $\mathcal{P}_1^{2,1}$     & 34  & 2584      & 181.656   & 29.324              & 0.02       \\
                                 & $(\mathcal{P}_1^{2,1})^1$ & 34  & 2584      & 181.606   & 29.324            & 2.11       \\
                                 & $\mathcal{P}^1_{123}$     & 35  & 2344      & 174.111   & 30.537             & 4.14       \\
                                 & ${P}^1_{123}$             & 35  & 2344      & 174.111   & 30.537             & 328.51     \\
                                 & LS                        & 20  & 1520      & 205.756   & 33.239           & 0.02       \\ \hline
\multirow{5}{*}{$120,60,30$}     & $\mathcal{P}_1^{2,1}$     & 49  & 5727      & 280.015   & 44.738             & 0.11       \\
                                 & $(\mathcal{P}_1^{2,1})^1$ & 49  & 5726      & 279.921   & 44.738               & 10.08      \\
                                 & $\mathcal{P}^1_{123}$     & 54  & 5518      & 256.148   & 48.145               & 11.69      \\
                                 & ${P}^1_{123}$             & 54  & 5518      & 256.148   & 48.145               & 12574.51   \\
                                 & LS                        & 30  & 3508      & 368.637   & 56.173              & 0.01       \\ \hline
\multirow{5}{*}{$160,80,40$}     & $\mathcal{P}_1^{2,1}$     & 62  & 9732      & 374.365   & 57.762               & 0.22       \\
                                 & $(\mathcal{P}_1^{2,1})^1$ & 62  & 9729      & 374.224   & 57.762               & 94.18      \\
                                 & $\mathcal{P}^1_{123}$     & 73  & 9824      & 338.068   & 61.635               & 75.08      \\
                                 & ${P}^1_{123}$             & -   & -         & -         & -                        & *          \\
                                 & LS                        & 40  & 6289      & 454.595   & 66.090              & 0.02       \\ \hline
\multirow{5}{*}{$200,100,50$}    & $\mathcal{P}_1^{2,1}$     & 75  & 14528     & 561.684   & 72.490               & 0.80       \\
                                 & $(\mathcal{P}_1^{2,1})^1$ & 75  & 14526     & 561.530   & 72.490               & 221.85     \\
                                 & $\mathcal{P}^1_{123}$     & 89  & 14918     & 516.202   & 78.013               & 344.54     \\
                                 & ${P}^1_{123}$             & -   & -         & -         & -                        & *          \\
                                 & LS                        & 50  & 9693      & 770.098   & 90.294              & 0.28       \\ \hline
\multirow{5}{*}{$240,120,60$}    & $\mathcal{P}_1^{2,1}$     & 102 & 23812     & 752.147   & 90.253               & 0.97       \\
                                 & $(\mathcal{P}_1^{2,1})^1$ & 102 & 23798     & 751.934   & 90.253               & 782.32     \\
                                 & $\mathcal{P}^1_{123}$     & 114 & 22591     & 678.332   & 97.611               & 269.50     \\
                                 & ${P}^1_{123}$             & -   & -         & -         & -                        & *          \\
                                 & LS                        & 60  & 14026     & 1069.745  & 115.824             & 0.05       \\ \hline
\multirow{5}{*}{$280,140,70$}    & $\mathcal{P}_1^{2,1}$     & 115 & 31108     & 921.040   & 104.272             & 8.57       \\
                                 & $(\mathcal{P}_1^{2,1})^1$ & 115 & 31100     & 920.799   & 104.272           & 6385.20    \\
                                 & $\mathcal{P}^1_{123}$     & 131 & 30278     & 837.155   & 112.825            & 13401.73   \\
                                 & ${P}^1_{123}$             & -   & -         & -         & -                      & *          \\
                                 & LS                        & 70  & 18959     & 1246.666  & 129.702             & 0.05       \\ \hline
\multirow{5}{*}{$320,160,80$}    & $\mathcal{P}_1^{2,1}$     & 132 & 41075     & 1058.841  & 119.115             & 8.50       \\
                                 & $(\mathcal{P}_1^{2,1})^1$ & -   & -         & -         & -                        & *          \\
                                 & $\mathcal{P}^1_{123}$     & -   & -         & -         & -                        & *          \\
                                 & ${P}^1_{123}$             & -   & -         & -         & -                        & *          \\
                                 & LS                        & 80  & 24939     & 1524.077  & 152.223             & 0.08       \\ \hline
\multirow{5}{*}{$1000,500,250$}  & $\mathcal{P}_1^{2,1}$     & 442 & 432276    & 5743.575  & 382.905             & 102.89     \\
                                 & $(\mathcal{P}_1^{2,1})^1$ & -   & -         & -         & -                        & *          \\
                                 & $\mathcal{P}^1_{123}$     & -   & -         & -         & -                        & *          \\
                                 & ${P}^1_{123}$             & -   & -         & -         & -                        & *          \\
                                 & LS                        & 250 & 245093    & 12088.928 & 626.678             & 11.41      \\ \hline
\multirow{5}{*}{$2000,1000,500$} & $\mathcal{P}_1^{2,1}$     & 913 & 1801923   & 14267.101 & 778.034             & 1507.02    \\
                                 & $(\mathcal{P}_1^{2,1})^1$ & -   & -         & -         & -                        & *          \\
                                 & $\mathcal{P}^1_{123}$     & -   & -         & -         & -                        & *          \\
                                 & ${P}^1_{123}$             & -   & -         & -         & -                        & *          \\
                                 & LS                        & 500 & 985950    & 33206.475 & 1301.181          & 63.40      \\ \hline
\multirow{5}{*}{$3000,1500,750$} & $\mathcal{P}_1^{2,1}$     & -   & -         & -         & -                       & *          \\
                                 & $(\mathcal{P}_1^{2,1})^1$ & -   & -         & -         & -                        & *          \\
                                 & $\mathcal{P}^1_{123}$     & -   & -         & -         & -                        & *          \\
                                 & ${P}^1_{123}$             & -   & -         & -         & -                        & *          \\
                                 & LS                        & 750 & 2224075   & 63486.782 & 2002.052            & 98.78      
\end{tabular}
\caption{Comparison between procedures}
\label{tab:results}
\end{table}

\section*{Acknowledgments} 
The authors gratefully acknowledge: (i) Laura Bolzano and Ahmad Mousavi
for suggesting to us  the use of norms like the  $2,1$-norm to attempt to induce structured sparsity of a generalized inverse, and (ii) Fei Wang for helpful conversations on our topic. 

 G. Ponte was supported in part by CNPq GM-GD scholarship 161501/2022-2. M. Fampa was supported in part by CNPq grants 305444/2019-0 and 434683/2018-3.  
J. Lee was supported in part by AFOSR grant FA9550-22-1-0172. 
This work is partially based upon work supported by the 
National Science Foundation under Grant No. DMS-1929284 while 
all of the authors were in residence at the Institute for Computational and Experimental Research in Mathematics (ICERM) in Providence, RI, during the Discrete Optimization program.

\bibliographystyle{alpha}
\bibliography{ginv}

\end{document}